\DeclareMathSymbol{\subsetneqq}{\mathbin}{AMSb}{36}
\newtheorem{theorem}{Theorem}
\theoremstyle{plain}
\newtheorem{corollary}{Corollary}
\newtheorem{definition}{Definition}
\newtheorem{lemma}{Lemma}
\newtheorem{remark}{Remark}
\numberwithin{equation}{section}
\begin{document}
\title[Blowup Criterion for Navier-Stojkes equations ]{The role of the Besov space $\mathbf{B}_{\infty }^{-1,\infty }$%
in the control of the eventual explosion in finite time of the
regular solutions of the Navier-Stokes equations}
\author{Ramzi May}
\address{Département de Mathématiques, Faculté des Sciences de Bizerte,
Jarzouna 2001 Bizerte Tunisie.} \email{Ramzi.May@fsb.rnu.tn}
\date{9 avril 2009}
\keywords{Navier-Stokes equations, Blow up,  Besov spaces, Bony's
paraproduct}

\begin{abstract}
This paper is essentially a translation from French of my article \cite{M1} published in 2003. Let $u\in C([0,T^{\ast }[;L^{3}(\mathbb{R}%
^{3})) $ be a maximal solution of the Navier-Stokes equations. We
prove that $u$ is $C^{\infty }$ on $]0,T^{\ast }[\times
\mathbb{R}^{3}$ and there exists a constant $\varepsilon _{\ast
}>0$ independent of $u$ such
that if $T^{\ast }$ is finite then, for all $\omega \in \overline{S(\mathbb{R%
}^{3})}^{B_{\infty }^{-1,\infty }},$ we have
$\overline{\lim_{t\rightarrow T^{\ast }}}\left\Vert u(t)-\omega
\right\Vert _{\mathbf{B}_{\infty }^{-1,\infty }}\geq \varepsilon
_{\ast }.\medskip $
\end{abstract}
 \maketitle

\section{Introduction and results} In this note, we consider the integral Navier-Stokes equations:%
\begin{equation*}\label{eq}
u(t)=e^{t\Delta }u_{0}+\mathbb{L}(\mathbb{P}\nabla .(u\otimes
u))(t),
\end{equation*}%
where $u_{0}(x)=(u_{01},$ $u_{02},u_{02})$ is a given initial data
satisfying the divergence free condition $\nabla .u_{0}=0$ and
$u(t,x)=(u_{1},u_{2},u_{3}),$ the velocity, is the unknown. The
operator $\mathbb{P=}\left( \mathbb{P}_{ij}\right) _{1\leq i,j\leq
d}$ is the Leray projector and $\mathbb{L}$ is the linear operator
defined by:%
\begin{equation*}
\mathbb{L(}f\mathbb{)(}t\mathbb{)=-}\int_{0}^{t}e^{(t-s)\Delta }f(s)ds.
\end{equation*}%
Here $\left( e^{t\Delta }\right) _{t>0}$ is the heat semi-group defined
throug the Fourier Transform $\mathcal{F}$%
\begin{equation*}
\mathcal{F}\left( e^{t\Delta }f\right) (\xi )=e^{-t\left\vert \xi
\right\vert ^{2}}\mathcal{F}\left( f\right) (\xi ).
\end{equation*}%
In the sequel, we denote by $\mathbf{L}_{\sigma }^{3}$ the space
of $f=(f_1,f_2,f_3)\in L^{3}(\mathbb{R}^{3})$ that $\nabla .f=0.$
It is well known (see \cite{K} and \cite{FLT}) that for any
initial data $u_{0}\in \mathbf{L}_{\sigma }^{3},$ the equation has a unique maximal solution $%
u\in C([0,T^{\ast }[;L^{3}(\mathbb{R}^{3})).$ Recently, by using
the Caffarelli, Kohn et Nirenberg criterion, P. G.
Lemari\'{e}-Rieusset \cite{L} have proved that such solution $u$
is smooth on $Q_{T^{\ast }}\equiv ]0,T^{\ast }[\times
\mathbb{R}^{3}.$ In this paper, we will give a direct and simple
proof of this result.
\medskip

\noindent \textbf{Hereafter, we suppose that the maximal existence time }$T^{\ast }$%
\textbf{\ of the solution }$u$\textbf{\ is finite.} The main purpose of this
short paper, is to study the behavior of the solution near blowup time $%
T^{\ast }.$ Let us first recall some known results in this
direction: J. Leray \cite{Ler} and Y. Giga \cite{G} proved that
for any $p$ in $]3,+\infty ]$ there exists a constant $c_{p}>0$
such that
\begin{equation*}
\left\Vert u(t)\right\Vert _{p}\geq ~c_{p}(T^{\ast }-t)^{\frac{1}{2}(\frac{3%
}{p}-1)}.
\end{equation*}%
For the limit case $p=3,$ H. Shor and W. Von Wahl \cite{SV} proved that the solution $%
u $ can not be extended to a continuous function from $[0,T^{\ast }]$ into $%
L^{3}(\mathbb{R}^{3}).$ Later, H. Kozono and H. Shor \cite{KS}
improved this result: they established that there exists a
constant $\varepsilon _{KS}>0$ such
that if $\lim_{t\rightarrow T^{\ast }}u(t)=u^{\ast }$ in $L^{3}(\mathbb{R}%
^{3})$ with respect to the weak topology, then%
\begin{equation*}
\overline{\lim_{t\rightarrow T^{\ast }}}\left\Vert u(t)\right\Vert
_{3}^{3}-\left\Vert u^{\ast }\right\Vert _{3}^{3}\geq \varepsilon _{KS}.
\end{equation*}%
As a consequence they deduced that $u\notin BV([0,T^{\ast }[,L^{3}(\mathbb{R}%
^{3})).$ Recently, L. Escauriaza, G. Seregin et V. \v{S}ver\'{a}k
\cite{ESV} have proved that if in addition the solution $u$
belongs to the
Leray-Hopf  energy space \ $\mathcal{L}_{T^{\ast }}=L^{\infty }([0,T^{\ast }[,L^{2}(%
\mathbb{R}^{3}))\cap L^{2}([0,T^{\ast }[,H^{1}(\mathbb{R}^{3}))$
then
\begin{equation*}
\overline{\lim_{t\rightarrow T^{\ast }}}\left\Vert u(t)\right\Vert
_{3}=\infty .
\end{equation*}%
In the present paper, we aim to study the behavior of the solution
$u$ in the limit space space $B_{\infty }^{-1,\infty
}(\mathbb{R}^{3})$ (we recall that for any $p\geq 3$ we have
$L^{p}(\mathbb{R}^{3})\subset B_{\infty }^{-1,\infty
}(\mathbb{R}^{3})).$

\bigskip Our main result reads as follows:

\begin{theorem}\label{th1}
There exists constant $\varepsilon _{\ast }>0$ independent on $u$
such that, for any vectorial distribution $\omega =(\omega
_{1},\omega _{2},\omega _{3}) $ in
$\overline{S(\mathbb{R}^{3})}^{B_{\infty }^{-1,\infty }},$ we have
\begin{equation*}
\overline{\lim_{t\rightarrow T^{\ast }}}\left\Vert u(t)-\omega \right\Vert
_{B_{\infty }^{-1,\infty }}\geq \varepsilon _{\ast }.
\end{equation*}
\end{theorem}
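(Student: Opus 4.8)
The plan is to argue by contradiction through a parabolic rescaling (concentration) argument at the blow-up time, which reduces the statement to a Liouville-type rigidity for ancient Navier--Stokes solutions that are small in $B_{\infty}^{-1,\infty}$. Suppose the conclusion fails: there are $\omega\in\overline{S(\mathbb{R}^{3})}^{B_{\infty}^{-1,\infty}}$ and a constant $\varepsilon_{\ast}>0$, to be fixed (universally) only at the very end, with $\overline{\lim_{t\to T^{\ast}}}\|u(t)-\omega\|_{B_{\infty}^{-1,\infty}}<\varepsilon_{\ast}$; then there is $t_{1}<T^{\ast}$ so that $\|u(t)-\omega\|_{B_{\infty}^{-1,\infty}}<\varepsilon_{\ast}$ for every $t\in[t_{1},T^{\ast})$. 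Since $T^{\ast}<\infty$, the Leray--Giga lower bound (taken with $p=\infty$) gives $\|u(t)\|_{\infty}\geq c_{\infty}(T^{\ast}-t)^{-1/2}$, so $\|u(t)\|_{\infty}\to\infty$ as $t\to T^{\ast}$ and $u$ genuinely concentrates at $T^{\ast}$.

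Next I would build rescaled solutions capturing this concentration. Picking $t_{n}\uparrow T^{\ast}$ and centres $s_{n}\to T^{\ast}$ at which the scale-invariant quantity $(T^{\ast}-s)^{1/2}\|u(s)\|_{\infty}$ is (up to a factor) maximal over $[t_{1},s_{n}]$, and setting $\lambda_{n}\to 0$ accordingly, define $v_{n}(\tau,y)=\lambda_{n}\,u(s_{n}+\lambda_{n}^{2}\tau,\lambda_{n}y)$. Each $v_{n}$ is a smooth solution of Navier--Stokes on $(a_{n},0]$ with $a_{n}\to-\infty$, and the choice of $s_{n}$ makes $\sup_{\tau\leq 0}\|v_{n}(\tau)\|_{\infty}$ bounded uniformly in $n$ while $\liminf_{n}\|v_{n}(0)\|_{\infty}>0$. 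Writing $u=\omega+r$ and $g_{\mu}:=\mu\,g(\mu\,\cdot)$, the invariance of the $B_{\infty}^{-1,\infty}$-norm under $g\mapsto g_{\mu}$ yields $v_{n}(\tau)=\omega_{\lambda_{n}}+\bigl(r(s_{n}+\lambda_{n}^{2}\tau)\bigr)_{\lambda_{n}}$ with $\bigl\|\bigl(r(\,\cdot\,)\bigr)_{\lambda_{n}}\bigr\|_{B_{\infty}^{-1,\infty}}=\|r(\,\cdot\,)\|_{B_{\infty}^{-1,\infty}}<\varepsilon_{\ast}$ on this range of $\tau$. Using the uniform $L^{\infty}$ bound together with a quantitative form of the interior regularity for Navier--Stokes (the regularity reproved in this paper, applied in rescaled form), the $v_{n}$ are precompact in $C^{\infty}_{\mathrm{loc}}((-\infty,0]\times\mathbb{R}^{3})$; after a spatial translation keeping $|v_{n}(0,0)|$ bounded away from $0$ and passing to a subsequence, $v_{n}\to v_{\infty}$, a smooth divergence-free ancient solution on $(-\infty,0]$ with $N:=\sup_{\tau\leq 0}\|v_{\infty}(\tau)\|_{\infty}<\infty$ and $v_{\infty}(0)\neq 0$.

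The crucial point is that $\omega\in\overline{S}^{B_{\infty}^{-1,\infty}}$ forces $\omega_{\lambda_{n}}\to 0$ in $\mathcal{S}'$ as $\lambda_{n}\to 0$ (immediate for $\omega\in S$ by dominated convergence, then in general by density and the scale-invariance of the norm — and likewise after the translation). Hence $v_{n}(\tau)-\omega_{\lambda_{n}}\to v_{\infty}(\tau)$ in $\mathcal{S}'$, and by weak-$*$ lower semicontinuity of $\|\cdot\|_{B_{\infty}^{-1,\infty}}$ one gets $\|v_{\infty}(\tau)\|_{B_{\infty}^{-1,\infty}}\leq\varepsilon_{\ast}$ for every $\tau\leq 0$: the possibly enormous size of $\omega$ has evaporated and only the small constant $\varepsilon_{\ast}$ survives into the limit. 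It then suffices to prove the rigidity: such a $v_{\infty}$ must vanish. To see this, use the Duhamel identity $v_{\infty}(\tau)=e^{h\Delta}v_{\infty}(\tau-h)-\int_{\tau-h}^{\tau}e^{(\tau-s)\Delta}\mathbb{P}\nabla\cdot(v_{\infty}\otimes v_{\infty})(s)\,ds$ for arbitrary $h>0$, and estimate $\|e^{h\Delta}v_{\infty}(\tau-h)\|_{\infty}\lesssim h^{-1/2}\|v_{\infty}(\tau-h)\|_{B_{\infty}^{-1,\infty}}\lesssim h^{-1/2}\varepsilon_{\ast}$ and $\bigl\|\int_{\tau-h}^{\tau}\cdots\bigr\|_{\infty}\lesssim\int_{\tau-h}^{\tau}(\tau-s)^{-1/2}\|v_{\infty}(s)\|_{\infty}^{2}\,ds\lesssim h^{1/2}N^{2}$; optimizing in $h$ gives $\|v_{\infty}(\tau)\|_{\infty}\lesssim\sqrt{\varepsilon_{\ast}}\,N$ for all $\tau\leq 0$, hence $N\leq C\sqrt{\varepsilon_{\ast}}\,N$, so $N=0$ as soon as $\varepsilon_{\ast}<C^{-2}$. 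This contradicts $v_{\infty}(0)\neq 0$; and since the threshold $\varepsilon_{\ast}$ depends only on the absolute constants in the two heat-semigroup estimates, not on $u$ or $\omega$, the theorem follows.

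The step I expect to be the real work is the compactness: establishing uniform bounds (and hence $C^{\infty}_{\mathrm{loc}}$-precompactness) for the rescaled family $\{v_{n}\}$ on compact subsets of $(-\infty,0]\times\mathbb{R}^{3}$, including the final time $\tau=0$ — essentially a scale-invariant, quantitative version of ``bounded velocity implies regularity'' — together with carefully tracking the scaling and translation so that the limit $v_{\infty}$ simultaneously keeps the small bound $\|v_{\infty}(\tau)\|_{B_{\infty}^{-1,\infty}}\leq\varepsilon_{\ast}$ and stays non-trivial (this is where the choice of the centres $s_{n}$ and the use of the Giga lower bound are essential). Once that is in hand, the Liouville step is a three-line computation and $\varepsilon_{\ast}$ is completely explicit.
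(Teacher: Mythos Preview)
Your proposal is correct in outline and describes a genuine alternative route---the concentration--compactness/Liouville paradigm---but the paper's proof is entirely different and much more direct: it never rescales, never extracts a limiting ancient solution, and uses no compactness at all. After showing (Steps~1--2) that $u$ is smooth on $]0,T^*[\times\mathbb{R}^3$ and that $\|u(t)\|_\infty$ is unbounded as $t\to T^*$, the paper assumes $\|u(t)-\omega\|_{B_\infty^{-1,\infty}}<\varepsilon$ on $[T^*-\delta,T^*)$ for some $\omega\in S(\mathbb{R}^3)$, sets $w(t)=u(T^*-\delta+t)$, and in the Duhamel formula decomposes $w\otimes w=(w-\omega)\otimes w+\omega\otimes w$ via the paraproducts $\pi_0,\pi_1$. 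Lemma~1 (the bounds $\pi_j:B_\infty^{-1,\infty}\times B_\infty^{s+1,\infty}\to B_\infty^{s,\infty}$ and $L^\infty\times B_\infty^{s+1,\infty}\to B_\infty^{s+1,\infty}$) together with Lemma~2 (smoothing of $\mathbb{L}$) gives, for any fixed $s>0$,
\[
\sup_{0<t<\delta_1}\|w(t)\|_{B_\infty^{s+1,\infty}}\le\|w_0\|_{B_\infty^{s+1,\infty}}+C\bigl(\varepsilon+\|\omega\|_\infty\sqrt{\delta}\bigr)\sup_{0<t<\delta_1}\|w(t)\|_{B_\infty^{s+1,\infty}}.
\]
Taking $\varepsilon_*=1/(4C)$ and then $\delta$ small makes the bracket $\le 1/2$, so $w$ stays bounded in $B_\infty^{s+1,\infty}\hookrightarrow L^\infty$ up to the blow-up time, contradicting Step~2; density then passes from $\omega\in S$ to $\omega\in\overline{S}^{B_\infty^{-1,\infty}}$.

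Thus the paper replaces your entire rescaling/compactness/Liouville machinery by a single closed a~priori estimate in $B_\infty^{s+1,\infty}$: the small $B_\infty^{-1,\infty}$-norm of $w-\omega$ enters through the paraproduct lemma, and the $\omega$-terms are handled by $\|\omega\|_\infty\sqrt\delta$. What your approach buys is a more conceptual picture (the obstruction is a nontrivial bounded ancient solution small in $B_\infty^{-1,\infty}$, ruled out by a three-line Liouville computation); what it costs is substantial: a point-selection/doubling argument to choose $s_n,\lambda_n$ so that $\sup_\tau\|v_n(\tau)\|_\infty$ stays bounded while $a_n\to-\infty$, uniform local regularity giving $C^\infty_{\mathrm{loc}}$-compactness up to $\tau=0$, and verification that the limit is a mild solution. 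One technical caution: the paper's $B_\infty^{-1,\infty}$ is \emph{inhomogeneous}, so your ``invariance of the $B_\infty^{-1,\infty}$-norm under $g\mapsto g_\mu$'' is not an equality but only the one-sided bound $\|g_\lambda\|_{B_\infty^{-1,\infty}}\le C\|g\|_{B_\infty^{-1,\infty}}$ for $0<\lambda\le 1$; this suffices for your argument but should be stated correctly.
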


\begin{remark}
This result remains true if we replace the space
$L^{3}(\mathbb{R}^{3})$ by any Lebesgue space
$L^{p}(\mathbb{R}^{3})$ with $p\geq 3$ or any Sobolev space
$H^{s}(\mathbb{R}^{3})$ with $s\geq \frac{1}{2}$ \cite{M}.
\end{remark}

\begin{remark}
Theorem \ref{th1} jointed to Weak-Strong uniqueness result of W.
Von Wahl allows to prove that any Leray-Hopf weak solution (see
\cite{L} for the definition) to the Navier-Stokes equations
belonging to the space $C([0,T],B_{\infty }^{-1,\infty })$ is
regular on $]0,T]\times \mathbb{R}^{3}$ \cite{M}.
\end{remark}

The following result is a direct consequence of Theorem \ref{th1}

\begin{corollary}
The solution $u$ does not belong to the space $BV([0,T^{\ast }[;B_{\infty
}^{-1,\infty }).$
\end{corollary}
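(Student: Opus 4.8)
The plan is to argue by contradiction, turning the $BV$ hypothesis into the existence of a limit at $T^{\ast }$ and then contradicting Theorem \ref{th1}. So suppose that $u\in BV([0,T^{\ast }[;B_{\infty }^{-1,\infty })$. The first step is to produce a candidate limit. Since the total variation of $u$ over $[0,T^{\ast }[$ is finite, the tail variation $V(s):=\mathrm{Var}\bigl(u;[s,T^{\ast }[\bigr)$ satisfies $V(s)\to 0$ as $s\to T^{\ast }$; hence for $s\le t<T^{\ast }$ one has $\left\Vert u(t)-u(s)\right\Vert _{B_{\infty }^{-1,\infty }}\le V(s)$, so $\{u(t)\}$ is Cauchy in $B_{\infty }^{-1,\infty }$ as $t\to T^{\ast }$ and converges to some $\omega \in B_{\infty }^{-1,\infty }$, with moreover $\left\Vert u(t)-\omega \right\Vert _{B_{\infty }^{-1,\infty }}\to 0$.

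The second step is to check that this $\omega $ is an admissible test distribution for Theorem \ref{th1}, i.e. that $\omega \in \overline{S(\mathbb{R}^{3})}^{B_{\infty }^{-1,\infty }}$. For every $t\in \lbrack 0,T^{\ast }[$ we have $u(t)\in L^{3}(\mathbb{R}^{3})$; since $S(\mathbb{R}^{3})$ is dense in $L^{3}(\mathbb{R}^{3})$ and the embedding $L^{3}(\mathbb{R}^{3})\hookrightarrow B_{\infty }^{-1,\infty }$ is continuous, each $u(t)$ lies in the closed subspace $\overline{S(\mathbb{R}^{3})}^{B_{\infty }^{-1,\infty }}$. Letting $t\to T^{\ast }$ and using that this subspace is closed in $B_{\infty }^{-1,\infty }$, we conclude $\omega \in \overline{S(\mathbb{R}^{3})}^{B_{\infty }^{-1,\infty }}$.

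Finally, I would apply Theorem \ref{th1} to this specific $\omega $, obtaining $\overline{\lim }_{t\to T^{\ast }}\left\Vert u(t)-\omega \right\Vert _{B_{\infty }^{-1,\infty }}\ge \varepsilon _{\ast }>0$, which directly contradicts the conclusion $\left\Vert u(t)-\omega \right\Vert _{B_{\infty }^{-1,\infty }}\to 0$ from the first step. Therefore $u\notin BV([0,T^{\ast }[;B_{\infty }^{-1,\infty })$.

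The only point requiring a little care is the second step: Theorem \ref{th1} is stated only for distributions in $\overline{S(\mathbb{R}^{3})}^{B_{\infty }^{-1,\infty }}$, so one must verify that the $BV$-limit $\omega $ does not escape this subspace; this is immediate from the density of $S(\mathbb{R}^{3})$ in $L^{3}(\mathbb{R}^{3})$ and the continuity of $L^{3}(\mathbb{R}^{3})\hookrightarrow B_{\infty }^{-1,\infty }$. Everything else is the standard fact that a Banach-space-valued function of bounded variation on a half-open interval admits a limit at the open endpoint.
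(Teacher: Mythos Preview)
Your proof is correct. Both your argument and the paper's hinge on Theorem~\ref{th1} together with the embedding $L^{3}(\mathbb{R}^{3})\subset \overline{S(\mathbb{R}^{3})}^{B_{\infty }^{-1,\infty }}$, but they deploy them differently. You argue by contradiction: assuming $u\in BV$, you extract the endpoint limit $\omega$, observe that $\omega$ stays in the closed subspace $\overline{S(\mathbb{R}^{3})}^{B_{\infty }^{-1,\infty }}$, and then a single application of Theorem~\ref{th1} with this $\omega$ yields the contradiction. The paper instead gives a direct construction: for each $t_{j}$ it applies Theorem~\ref{th1} with $\omega=u(t_{j})\in L^{3}\subset \overline{S(\mathbb{R}^{3})}^{B_{\infty }^{-1,\infty }}$ to find $t_{j+1}>t_{j}$ with $\left\Vert u(t_{j+1})-u(t_{j})\right\Vert _{B_{\infty }^{-1,\infty }}\ge \varepsilon_{\ast}$, so the variation along the resulting partition diverges. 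The paper's route avoids invoking the abstract ``$BV$ implies existence of one-sided limits'' fact and exhibits an explicit witness to infinite variation; your route is a clean one-shot contradiction. Both are short and essentially equivalent in depth.
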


\begin{proof}
By using the embedding $L^{3}(\mathbb{R}^{3})\subset \overline{S(\mathbb{R}%
^{3})}^{B_{\infty }^{-1,\infty }}$ and  Theorem \ref{th1}, one can
easily construct by indiction an increasing sequence $(t_{j})_{j}$
in $]0,T^{\ast }[$ such that
\begin{equation*}
\forall j,~\left\Vert u(t_{j+1})-u(t_{j})\right\Vert _{B_{\infty
}^{-1,\infty }}\geq \varepsilon _{\ast }.
\end{equation*}%
Therefore,%
\begin{equation*}
\sum_{j}\left\Vert u(t_{j+1})-u(t_{j})\right\Vert _{B_{\infty }^{-1,\infty
}}=\infty ,
\end{equation*}%
which implies the desired   result.
\end{proof}

\section{Preliminaries} In this section, we recall some definitions and results that will
be useful
in the proof of Theorem \ref{th1}. First, we define the nonhomogeneous Besov spaces $%
B_{p}^{s,\infty }.$ To do so, we need to introduce the
Littlewood-Paley decomposition: Let $\varphi $ be in the Schwartz
class $S(\mathbb{R}^{3})$ such that its Fourier Transform
$\mathcal{F}(\varphi )$ is identically equal to $1$ on the ball
$B(0,1)$ and vanishes outside the ball $B(0,2).$ For
$j\in \mathbb{N},~k\in \mathbb{N}^{\ast }$ and $f\in S^{\prime }(\mathbb{R}%
^{3}),$ we set%
\begin{equation*}
S_{j}f\equiv \varphi _{j}\ast f,~\Delta _{k}f\equiv S_{k}f-S_{k-1}f
\end{equation*}%
where $\varphi _{j}\equiv 2^{3j}\varphi (2^{j}.).$ Hence, for any $f\in
S^{\prime }(\mathbb{R}^{3}),$ we have the identity%
\begin{equation*}
f=S_{0}f+\sum_{k\geq 0}\Delta _{k}f,
\end{equation*}%
which is called the Littlewood-Paley decomposition of $f.$ In the
sequel we often denote the operator $S_{0}$ by $\Delta _{0}.$

\begin{definition}
Let $s\in \mathbb{R}$ and $1\leq p\leq \infty .$ The Besov space $%
B_{p}^{s,\infty }$ is defined by:%
\begin{equation*}
B_{p}^{s,\infty }=\{f\in S^{\prime }(\mathbb{R}^{3});~~\left\Vert
f\right\Vert _{B_{p}^{s,\infty }}\equiv \sup_{k\in N}2^{sk}\left\Vert \Delta
_{k}f\right\Vert _{p}<\infty \}.
\end{equation*}%
The space $\tilde{B}_{p}^{s,\infty }$ is the closure of $S(\mathbb{R}^{3})$
in $B_{p}^{s,\infty }.$
\end{definition}

In order to study the pointwise product in the Besov space, we
will use the
following weak version of the Bony decomposition: For $f$ and $g$ in $%
S^{\prime }(\mathbb{R}^{3}),$ we define%
\begin{eqnarray*}
\pi _{0}(f,g) &=&\sum_{k=0}^{\infty }S_{k}f~\Delta _{k}g, \\
\pi _{1}(f,g) &=&\sum_{k=0}^{\infty }S_{k+1}f~\Delta _{k}g.
\end{eqnarray*}%
Formally,%
\begin{equation*}
fg=\pi _{0}(f,g)+\pi _{1}(g,f).
\end{equation*}%
\medskip
The following elementary lemma will play a crucial role for the
proof of Theorem \ref{th1}

\begin{lemma}\label{lemme1}
Let $s>0.$ The bilinear operators $\pi _{0}$ and $\pi _{1}$ are
bounded from
$B_{\infty }^{-1,\infty }\times B_{\infty }^{s+1,\infty }$ (respectively, $%
L^{\infty }(\mathbb{R}^{3})\times B_{\infty }^{s+1,\infty })$ into $%
B_{\infty }^{s,\infty }$ (respectively, $B_{\infty }^{s+1,\infty
}$).
\end{lemma}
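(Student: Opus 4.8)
The plan is to estimate each dyadic block $\Delta_k\pi_0(f,g)$ and $\Delta_k\pi_1(f,g)$ using the standard spectral localization properties of the Littlewood-Paley blocks. First I would record two elementary facts: (i) the Fourier support of $S_k f\,\Delta_k g$ is contained in a ball of radius comparable to $2^k$ (since $\widehat{S_k f}$ is supported in $|\xi|\lesssim 2^k$ and $\widehat{\Delta_k g}$ in $|\xi|\sim 2^k$), hence $\Delta_j(S_k f\,\Delta_k g)=0$ unless $j\le k+N_0$ for some fixed integer $N_0$; and (ii) the Bernstein-type bound $\|S_k h\|_\infty\lesssim \sum_{\ell\le k}\|\Delta_\ell h\|_\infty$, which for $h\in B_\infty^{-1,\infty}$ gives $\|S_k h\|_\infty\lesssim \sum_{\ell\le k}2^{\ell}\,2^{-\ell}\|\Delta_\ell h\|_\infty \lesssim 2^{k}\|h\|_{B_\infty^{-1,\infty}}$, while for $h\in L^\infty$ one simply has $\|S_k h\|_\infty\lesssim\|h\|_\infty$.

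For $\pi_0$ with $f\in B_\infty^{-1,\infty}$: using $\|\Delta_j\|_{\infty\to\infty}\lesssim 1$, fact (i), and then facts (ii) and the definition of $B_\infty^{s+1,\infty}$,
\begin{equation*}
\|\Delta_j\pi_0(f,g)\|_\infty \lesssim \sum_{k\ge j-N_0}\|S_k f\|_\infty\,\|\Delta_k g\|_\infty \lesssim \sum_{k\ge j-N_0} 2^{k}\|f\|_{B_\infty^{-1,\infty}}\cdot 2^{-(s+1)k}\|g\|_{B_\infty^{s+1,\infty}}.
\end{equation*}
Since $s>0$, the series $\sum_{k\ge j-N_0}2^{-sk}$ converges and is $\lesssim 2^{-sj}$, which yields $2^{sj}\|\Delta_j\pi_0(f,g)\|_\infty\lesssim \|f\|_{B_\infty^{-1,\infty}}\|g\|_{B_\infty^{s+1,\infty}}$, i.e. $\pi_0(f,g)\in B_\infty^{s,\infty}$. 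The estimate for $\pi_1$ is identical up to replacing $S_k$ by $S_{k+1}$, which changes nothing. In the $L^\infty$ case one uses $\|S_k f\|_\infty\lesssim\|f\|_\infty$ instead, so the $2^k$ factor disappears and the same summation (still convergent since $s>0$, now producing $2^{-(s+1)j}$) gives membership in $B_\infty^{s+1,\infty}$.

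The only genuine point to be careful about — rather than a real obstacle — is the convergence of the defining series and the justification that the formal sum $\pi_0(f,g)$ actually defines a tempered distribution to which $\Delta_j$ can be applied termwise; this is handled precisely by the spectral-support fact (i), which ensures that for fixed $j$ only finitely many... wait, actually infinitely many terms survive under $\Delta_j$ from above, but the geometric decay just established shows the partial sums converge in $B_\infty^{s,\infty}$, hence in $S'(\mathbb{R}^3)$. I would therefore present the termwise estimate first as an a priori bound on finite partial sums and then pass to the limit. Everything else is a routine application of Bernstein's inequality and the triangle inequality.
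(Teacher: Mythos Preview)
Your argument is correct and is the standard Littlewood--Paley estimate for paraproduct-type operators: spectral localization gives $\Delta_j(S_kf\,\Delta_kg)=0$ unless $k\ge j-N_0$, Bernstein gives $\|S_kf\|_\infty\lesssim 2^k\|f\|_{B_\infty^{-1,\infty}}$ (or $\lesssim\|f\|_\infty$ in the second case), and the geometric tail $\sum_{k\ge j-N_0}2^{-sk}$ closes the bound since $s>0$. The paper does not actually prove this lemma; it simply refers the reader to Lemari\'e-Rieusset's book, where one finds exactly this computation. So you have effectively written out the cited reference.

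One small point: in your closing paragraph you hesitate about whether finitely or infinitely many terms survive under $\Delta_j$. Infinitely many do (all $k\ge j-N_0$), but your own estimate already shows the series $\sum_{k\ge j-N_0}S_kf\,\Delta_kg$ converges absolutely in $L^\infty$, hence in $\mathcal S'$, so $\Delta_j$ may be applied termwise directly; the detour through partial sums in $B_\infty^{s,\infty}$ is unnecessary, though also valid.
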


\begin{proof} One can consult the book \cite{L} of P.G. Lemari\'{e}%
-Rieusset.
\end{proof}
The next lemma recalls an important regularizing property of the
heat kernel

\begin{lemma} \label{lemme2}
Let $T\in ]0,1],~\alpha \in \{1;2\}$ and $r\in \mathbb{R}.$ The
linear operator $L,$ defined by (), is continuous from $L^{\infty
}([0,T],B_{\infty }^{r,\infty })$ into $L^{\infty
}([0,T],B_{\infty }^{r+\alpha ,\infty })$ and its norm is bounded
by $CT^{\frac{2-\alpha }{2}}$ where $C$ is a constant independent
of $T.$
\end{lemma}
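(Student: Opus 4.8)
The plan is to estimate the Littlewood-Paley blocks of $\mathbb{L}(f)(t)$ one at a time, exploiting the smoothing effect of the heat semigroup on frequency-localized functions. The starting point is the standard heat-kernel estimate (the one used in \cite{L}): there is an absolute constant $c>0$ so that for every $k\ge 1$ and every $g\in S'(\mathbb{R}^3)$ whose Fourier transform is supported in the annulus $\{2^{k-1}\le |\xi|\le 2^{k+1}\}$ one has $\|e^{\tau \Delta}g\|_\infty\le Ce^{-c\tau 2^{2k}}\|g\|_\infty$, whereas if $\widehat g$ is supported in a fixed ball then merely $\|e^{\tau\Delta}g\|_\infty\le C\|g\|_\infty$ (the heat semigroup being a contraction on $L^\infty$). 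Both follow by writing $e^{\tau\Delta}g$ as the convolution of $g$ with a kernel whose $L^1$ norm carries the stated gain, using that $\widehat{\Delta_k f}$ is supported in an annulus of size $2^k$ for $k\ge 1$ and $\widehat{\Delta_0 f}=\widehat{S_0 f}$ in a ball.

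Since $\Delta_k$ commutes with $e^{(t-s)\Delta}$, for $k\ge 1$ and $t\in[0,T]$ we get
\[
\|\Delta_k\mathbb{L}(f)(t)\|_\infty\le \int_0^t\|e^{(t-s)\Delta}\Delta_k f(s)\|_\infty\,ds\le C\int_0^t e^{-c(t-s)2^{2k}}\|\Delta_k f(s)\|_\infty\,ds .
\]
Multiplying by $2^{(r+\alpha)k}$ and bounding $2^{rk}\|\Delta_k f(s)\|_\infty$ by $\|f\|_{L^\infty([0,T];B_\infty^{r,\infty})}$, the proof reduces to the elementary estimate
\[
2^{\alpha k}\int_0^t e^{-c(t-s)2^{2k}}\,ds=2^{(\alpha-2)k}\int_0^{t2^{2k}}e^{-c\sigma}\,d\sigma\le C\,T^{\frac{2-\alpha}{2}}
\]
for $k\ge 1$, $T\in\,]0,1]$, $\alpha\in\{1,2\}$. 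For $\alpha=2$ the middle term is at most $\int_0^\infty e^{-c\sigma}\,d\sigma=c^{-1}$. For $\alpha=1$ it equals $2^{-k}\int_0^{t2^{2k}}e^{-c\sigma}\,d\sigma$, and using $\int_0^R e^{-c\sigma}\,d\sigma\le\min(R,c^{-1})$ one checks separately the cases $t2^{2k}\le 1$ (giving $\le t2^k\le\sqrt t\le\sqrt T$) and $t2^{2k}>1$ (giving $\le c^{-1}2^{-k}\le c^{-1}\sqrt t\le c^{-1}\sqrt T$).

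For the low-frequency block $\Delta_0=S_0$ we simply use the $L^\infty$ bound for $e^{\tau\Delta}$, so $\|\Delta_0\mathbb{L}(f)(t)\|_\infty\le \int_0^t\|\Delta_0 f(s)\|_\infty\,ds\le T\,\|f\|_{L^\infty([0,T];B_\infty^{r,\infty})}$, and since $T\le 1$ and $(2-\alpha)/2\le 1$ we have $T\le T^{\frac{2-\alpha}{2}}$. Taking the supremum over $k\in\mathbb{N}$ and over $t\in[0,T]$ combines the high- and low-frequency estimates and produces the desired inequality $\|\mathbb{L}(f)\|_{L^\infty([0,T];B_\infty^{r+\alpha,\infty})}\le C\,T^{\frac{2-\alpha}{2}}\|f\|_{L^\infty([0,T];B_\infty^{r,\infty})}$ with $C$ depending only on $\varphi$. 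The only point requiring a little care is the case $\alpha=1$ of the elementary integral bound; the rest is routine once the heat-kernel estimate and the frequency localization of the blocks are in hand.
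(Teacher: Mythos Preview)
Your argument is correct and is precisely the standard proof one finds in the cited reference \cite{C} (or in \cite{L}): the paper itself does not give a proof but merely refers the reader there, so there is nothing further to compare.
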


\begin{proof}
See for example \cite{C}.
\end{proof}

We conclude this section by setting a slightly modified version of
the well-known existence theorem of T. Kato:

\begin{theorem} \label{th2}
Let $v_{0}\in \mathbf{L}_{\sigma }^{3}.$ Then, there exists a
unique $T_{\ast }\equiv T_{K}^{\ast }(v_{0})\in ]0,\infty ]$ and a
unique solution $v\equiv S_{K}^{\ast }(v_{0})$ to the integral
Navier-Stokes equations with initial data $v_{0}$ belonging to the
space $\cap _{0<T<T_{\ast }}\mathbf{L}_{K}^{n}(Q_{T})$, where
$\mathbf{L}_{K}^{n}(Q_{T})$ is the space of function $w\in
C([0,T];\mathbf{L}_{\sigma }^{3}))$ satisfying $\sqrt{t}w\in C([0,T];C_{0}(%
\mathbb{R}^{3}))$ and $\lim_{t\rightarrow 0}\sqrt{t}\left\Vert
w(t)\right\Vert _{\mathbf{\infty }}=0.$ Moreover, $v$ is smooth on $%
]0,T_{\ast }[\times \mathbb{R}^{3},$ more precisely, $v\in \cap
_{j,i\in \mathbb{N}}C_{t}^{i}(]0,T_{\ast }[,\tilde{B}_{\infty
}^{j,\infty }).$ Finally, there exists a constant $\varepsilon
_{3}>0$ independent of $v_{0}$ such that
\begin{equation*}
T_{K}^{\ast }(v_{0})\geq \sup \{T\in ]0,1];~\left( 1+\left\Vert
v_{0}\right\Vert _{\mathbf{3}}\right)
\sup_{0<t<T}\sqrt{t}\left\Vert e^{t\Delta }v_{0}\right\Vert
_{\mathbf{\infty }}\leq \varepsilon _{3}\}.
\end{equation*}
\end{theorem}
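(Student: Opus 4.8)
The plan is to obtain $v$ as the fixed point of the Picard map $\Phi(v)=e^{t\Delta}v_{0}+B(v,v)$, where $B(v,w)(t)=\mathbb{L}(\mathbb{P}\nabla .(v\otimes w))(t)$, in a scale-invariant Kato space, and then to bootstrap regularity using the smoothing Lemma \ref{lemme2}. Concretely, for $T\in ]0,1]$ I would work in the Banach space $E_{T}$ of all $w\in C([0,T];\mathbf{L}_{\sigma }^{3})$ with $\sqrt{t}\,w\in C([0,T];C_{0}(\mathbb{R}^{3}))$ and $\lim_{t\rightarrow 0}\sqrt{t}\Vert w(t)\Vert _{\infty }=0$, normed by $\Vert w\Vert _{E_{T}}=M_{T}(w)+N_{T}(w)$ with $M_{T}(w)=\sup_{0<t<T}\Vert w(t)\Vert _{3}$ and $N_{T}(w)=\sup_{0<t<T}\sqrt{t}\Vert w(t)\Vert _{\infty }$. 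This $E_{T}$ is exactly the space $\mathbf{L}_{K}^{n}(Q_{T})$ of the statement, so a fixed point of $\Phi$ in $E_{T}$ is the desired solution.

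Two estimates drive everything. First, the linear term: $e^{t\Delta }$ is a contraction on $L^{3}$, so $M_{T}(e^{\cdot \Delta }v_{0})\leq \Vert v_{0}\Vert _{3}$, while for $v_{0}\in L^{3}$ one has $N_{T}(e^{\cdot \Delta }v_{0})=a_{T}:=\sup_{0<t<T}\sqrt{t}\Vert e^{t\Delta }v_{0}\Vert _{\infty }\rightarrow 0$ as $T\rightarrow 0$ (first for $v_{0}\in S(\mathbb{R}^{3})$ from the explicit bound $\Vert e^{t\Delta }v_{0}\Vert _{\infty }\lesssim t^{-1/2}\Vert v_{0}\Vert _{3}$, then for general $v_{0}$ by density). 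Second, the bilinear term: using the classical smoothing bounds $\Vert e^{\tau \Delta }\mathbb{P}\nabla .f\Vert _{q}\lesssim \tau ^{-\frac{1}{2}-\frac{3}{2}(\frac{1}{p}-\frac{1}{q})}\Vert f\Vert _{p}$ together with H\"{o}lder and interpolation between $L^{3}$ and $L^{\infty }$, I would prove the scale-invariant estimates
\begin{equation*}
M_{T}(B(v,w))\lesssim N_{T}(v)\,M_{T}(w),\qquad N_{T}(B(v,w))\lesssim N_{T}(v)\big(M_{T}(w)+N_{T}(w)\big),
\end{equation*}
with constants independent of $T\in ]0,1]$; the time integrals reduce to finite Beta integrals after the interpolation (for instance, routing the $L^{\infty }$ output through $L^{6}$ turns the borderline kernel $(t-s)^{-1}s^{-1/2}$ into the integrable $(t-s)^{-3/4}s^{-3/4}$).

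The crucial point is that every bilinear bound carries at least one factor of the small quantity $N_{T}$; hence the quadratic map is a contraction on a ball of $E_{T}$ as soon as $a_{T}$ is small, even though $M_{T}(e^{\cdot \Delta }v_{0})$ need not be small. Feeding $M_{T}(e^{\cdot \Delta }v_{0})\leq \Vert v_{0}\Vert _{3}$ and $N_{T}(e^{\cdot \Delta }v_{0})=a_{T}$ into the standard Picard contraction lemma, one checks that a unique fixed point with $M_{T}\leq 2\Vert v_{0}\Vert _{3}$, $N_{T}\leq 2a_{T}$ exists provided $(1+\Vert v_{0}\Vert _{3})\,a_{T}\leq \varepsilon _{3}$ for a universal $\varepsilon _{3}>0$: the coupling of the $L^{3}$ size into the quadratic term is precisely what produces the factor $(1+\Vert v_{0}\Vert _{3})$ and the stated lower bound on $T_{K}^{\ast }(v_{0})$. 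Uniqueness in the class $\cap _{0<T<T_{\ast }}\mathbf{L}_{K}^{n}(Q_{T})$ follows by applying the same bilinear estimate to the difference of two solutions on a short initial interval and then continuing by connectedness, and $T_{\ast }$ is defined as the supremum of existence times.

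Finally, for the smoothing assertion $v\in \cap _{i,j}C_{t}^{i}(]0,T_{\ast }[,\tilde{B}_{\infty }^{j,\infty })$ I would bootstrap on the integral equation. Fixing $0<t_{0}<T_{\ast }$, I use that $v(t_{0})\in \mathbf{L}_{\sigma }^{3}\subset B_{\infty }^{-1,\infty }$ and that $v$ is bounded in $B_{\infty }^{-1,\infty }$ on $[t_{0},T]$; writing $v(t_{0}+\cdot )=e^{\cdot \Delta }v(t_{0})+\mathbb{L}(\mathbb{P}\nabla .(v\otimes v))$ and combining Lemma \ref{lemme2} (which gains $\alpha \in \{1,2\}$ derivatives in the Besov scale) with the product Lemma \ref{lemme1} (which controls $v\otimes v$ in $B_{\infty }^{s,\infty }$ out of $B_{\infty }^{-1,\infty }\times B_{\infty }^{s+1,\infty }$), each iteration raises the spatial Besov index by a fixed amount, so after finitely many steps $v$ lies in every $\tilde{B}_{\infty }^{j,\infty }$ locally in $t>0$; differentiating $\partial _{t}v=\Delta v-\mathbb{P}\nabla .(v\otimes v)$ then upgrades this to joint smoothness. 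The main obstacle is the bilinear estimate at the critical scaling: the naive $L^{\infty }$ computation yields a divergent time integral, and only the interpolation between the two Kato norms, which is also what forces the little-o condition at $t=0$, closes it with a $T$-independent constant; the fixed point, uniqueness, the explicit lower bound, and the regularity bootstrap are then routine.
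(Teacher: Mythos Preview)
Your proposal is essentially correct and follows the standard Kato fixed-point strategy. Note, however, that the paper does not actually prove this theorem: it is stated in the preliminaries as ``a slightly modified version of the well-known existence theorem of T.~Kato'' and used as a black box, with the existence/uniqueness part referenced to \cite{K} and \cite{FLT} and the smoothness part implicit in the literature (e.g.\ \cite{L}). So there is no ``paper's own proof'' to compare against.

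That said, your outline matches the argument one finds in those references. The bilinear estimates $M_{T}(B(v,w))\lesssim N_{T}(v)M_{T}(w)$ and $N_{T}(B(v,w))\lesssim N_{T}(v)(M_{T}(w)+N_{T}(w))$ are the right ones, and routing the $L^{\infty}$ bound through $L^{6}$ to tame the time kernel is the standard trick. The way the factor $(1+\Vert v_{0}\Vert_{3})$ arises from coupling the $L^{3}$ norm into the contraction condition is exactly what justifies the stated lower bound on $T_{K}^{\ast}(v_{0})$. One small point in the regularity bootstrap: Lemma~\ref{lemme1} is stated for $s>0$, so the very first step (from $v\in L^{\infty}$ to $v\in B_{\infty}^{1,\infty}$) should be done directly via $v\otimes v\in L^{\infty}\subset B_{\infty}^{0,\infty}$ and Lemma~\ref{lemme2} with $\alpha=2$, before invoking Lemma~\ref{lemme1} at higher indices; after that the iteration proceeds as you describe.
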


An immediate consequence of this theorem is the following
important result
\begin{lemma} \label{lemme3}
Let $v_{0}\in \mathbf{L}_{\sigma }^{3}$. Set $v=S_{K}^{\ast
}(v_{0})$ and $T_{K}^{\ast }=T_{K}^{\ast }(v_{0}).$ Then, for any
$t_{0}\in ]0,T_{K}^{\ast }[,$ we have $T_{K}^{\ast
}(v(t_{0}))=T_{K}^{\ast }-t_{0}$ and $S_{K}^{\ast
}(v(t_{0}))=v(.+t_{0}).$ Moreover, if $0<T_{K}^{\ast
}-t_{0}\leq 1$ then%
\begin{equation}
I_{\ast }(v_{0},t_{0})\overset{\text{def}}{=}\left( 1+\left\Vert
v(t_{0})\right\Vert _{\mathbf{3}}\right) \sup_{0<t<T_{K}^{\ast
}(v_{0})-t_{0}}\sqrt{t}\left\Vert e^{t\Delta }\left(
v(t_{0})\right) \right\Vert _{\mathbf{\infty }}>\varepsilon _{3}.
  \label{E3}
\end{equation}
\end{lemma}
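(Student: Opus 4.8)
The plan is to combine the uniqueness statement of Theorem \ref{th2} with the obvious time-translation invariance of the integral Navier-Stokes equations. \textbf{Step 1: time translation.} Fix $t_{0}\in ]0,T_{K}^{\ast }[$ and put $w(t):=v(t+t_{0})$. Since $v=S_{K}^{\ast }(v_{0})$ solves the integral equation on $[0,T_{K}^{\ast }[$, for $0\leq t<T_{K}^{\ast }-t_{0}$ we have
\[
w(t)=e^{(t+t_{0})\Delta }v_{0}-\int_{0}^{t+t_{0}}e^{(t+t_{0}-s)\Delta }\,\mathbb{P}\nabla \cdot (v\otimes v)(s)\,ds .
\]
I would split this integral at $s=t_{0}$, apply the semigroup identity $e^{(t+t_{0}-s)\Delta }=e^{t\Delta }e^{(t_{0}-s)\Delta }$ to the part over $[0,t_{0}]$, and perform the substitution $s=t_{0}+\sigma $ in the part over $[t_{0},t+t_{0}]$. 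The first two terms then collapse to $e^{t\Delta }v(t_{0})$ and the last one to $\mathbb{L}(\mathbb{P}\nabla \cdot (w\otimes w))(t)$, so that $w$ solves the integral Navier-Stokes equations with initial data $v(t_{0})$. Moreover $w\in \mathbf{L}_{K}^{n}(Q_{T})$ for every $T<T_{K}^{\ast }-t_{0}$: indeed $w(t)=v(t+t_{0})$ only involves the values of $v$ at times $\geq t_{0}>0$, where $v$ is continuous with values in $\mathbf{L}_{\sigma }^{3}\cap C_{0}(\mathbb{R}^{3})$, whence in particular $\sqrt{t}\,\|w(t)\|_{\infty }\to 0$ as $t\to 0$. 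By the uniqueness part of Theorem \ref{th2} we conclude $S_{K}^{\ast }(v(t_{0}))=w=v(.+t_{0})$ and $T_{K}^{\ast }(v(t_{0}))\geq T_{K}^{\ast }-t_{0}$. For the reverse inequality I would run the computation backwards: if $S_{K}^{\ast }(v(t_{0}))$ were defined beyond $T_{K}^{\ast }-t_{0}$, then concatenating $v$ on $[0,t_{0}]$ with $S_{K}^{\ast }(v(t_{0}))$ would, by the same splitting of the Duhamel integral, produce a solution in the Kato class with data $v_{0}$ on an interval strictly larger than $[0,T_{K}^{\ast }[$, contradicting the maximality of $T_{K}^{\ast }$. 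Hence $T_{K}^{\ast }(v(t_{0}))=T_{K}^{\ast }-t_{0}$, which settles the first two assertions.

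\textbf{Step 2: the bound \eqref{E3}.} I would use the lower bound in Theorem \ref{th2} in contrapositive form, together with its standard sharpening: the fixed-point solution with data $w_{0}\in \mathbf{L}_{\sigma }^{3}$ actually belongs to $\mathbf{L}_{K}^{n}(Q_{T})$, hence is defined on the \emph{closed} interval $[0,T]$, as soon as $T\in ]0,1]$ and $(1+\|w_{0}\|_{3})\sup_{0<t<T}\sqrt{t}\,\|e^{t\Delta }w_{0}\|_{\infty }\leq \varepsilon _{3}$ (that the open-interval supremum suffices follows because $t\mapsto \sqrt{t}\,\|e^{t\Delta }w_{0}\|_{\infty }$ is continuous on $]0,\infty [$ and vanishes at $0$); since then the value of that solution at time $T$ again lies in $\mathbf{L}_{\sigma }^{3}$, a further application of Theorem \ref{th2} extends the solution past $T$, so $T_{K}^{\ast }(w_{0})>T$. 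Apply this with $w_{0}=v(t_{0})$ and $T=T_{K}^{\ast }-t_{0}$, which lies in $]0,1]$ by hypothesis: by Step 1 we have $T_{K}^{\ast }(v(t_{0}))=T_{K}^{\ast }-t_{0}=T$, not $>T$, so the smallness condition must fail, i.e.
\[
I_{\ast }(v_{0},t_{0})=(1+\|v(t_{0})\|_{3})\sup_{0<t<T_{K}^{\ast }-t_{0}}\sqrt{t}\,\|e^{t\Delta }v(t_{0})\|_{\infty }>\varepsilon _{3}.
\]

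\textbf{Main obstacle.} Step 1 is essentially bookkeeping: it rests only on the semigroup property of $(e^{t\Delta })_{t>0}$ and the linearity of $\mathbb{L}$, and the verification that the shifted and concatenated functions stay in the Kato class is routine. The one point requiring care is Step 2, where one must invoke the local existence theorem in the sharper form that places the solution on the \emph{closed} time interval under the scale-invariant smallness assumption; this is exactly what makes ``$\leq \varepsilon _{3}$'' incompatible with $T_{K}^{\ast }-t_{0}$ being the blow-up time of $S_{K}^{\ast }(v(t_{0}))$.
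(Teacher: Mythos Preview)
Your argument is correct. The paper does not actually supply a proof of Lemma~\ref{lemme3}; it merely states that the lemma is ``an immediate consequence'' of Theorem~\ref{th2}. What you have written is exactly the routine verification that this sentence is hiding: the semigroup/Duhamel bookkeeping for time translation, the uniqueness part of Theorem~\ref{th2} to identify $S_{K}^{\ast}(v(t_{0}))$ with $v(\cdot+t_{0})$, and the contrapositive of the lower bound on $T_{K}^{\ast}$ for \eqref{E3}. Your remark in Step~2 that one needs the Kato fixed point to live on the \emph{closed} interval $[0,T]$ (so that one can restart and get $T_{K}^{\ast}(v(t_{0}))>T$, not merely $\geq T$) is precisely the point that turns the displayed inequality in Theorem~\ref{th2} into the strict inequality \eqref{E3}; this is standard and the paper clearly intends it.
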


\section{Proof of Theorem \ref{th1}} We divide the proof into 3
steps:\medskip

\noindent \textbf{First step:} We claim that $T^{\ast
}=T_{K}^{\ast }(u_{0})$ and $u=S_{K}^{\ast }(u_{0})$ (this implies
in particular, thanks to Theorem \ref{th2}, that the solution $u$
is regular on $]0,T^{\ast }[\times \mathbb{R}^{3}$). The
uniqueness theorem of solutions to the Navier-Stokes equations in
the space $C([0,T];\mathbf{L}_{\sigma }^{3})$ [3] ensures that
$T^{\ast }\geq
T_{K}^{\ast }(u_{0})$ and $u=S_{K}^{\ast }(u_{0})$ on the interval $%
[0,T_{K}^{\ast }(u_{0})[.$ Thus, we conclude once we show that $T^{\ast
}\leq T_{K}^{\ast }(u_{0}).$ We argue by opposition, we suppose that $%
T_{K}^{\ast }(u_{0})<T^{\ast }.$ Hence, the set $S_{K}^{\ast
}(u_{0})\left( [0,T_{K}^{\ast }(u_{0})[\right) =u\left(
[0,T_{K}^{\ast }(u_{0})[\right) $ is relatively compact in the
space $\mathbf{L}_{\sigma }^{3}.$ Therefore, by
using the inequality%
\begin{equation*}
\forall f\in L^{3}(\mathbb{R}^{3}),~\sup_{s>0}\sqrt{s}\left\Vert
e^{s\Delta }f\right\Vert _{\infty }\leq C\left\Vert f\right\Vert
_{3}
\end{equation*}%
and the fact%
\begin{equation*}
\forall f\in L^{3}(\mathbb{R}^{3}),~\lim_{s\rightarrow 0}\sqrt{s}\left\Vert
e^{s\Delta }f\right\Vert _{\infty }=0,
\end{equation*}%
one can easily deduce that there exists $\lambda \in ]0,1[$ such
that, for all $t_{0}\in \lbrack 0,T_{K}^{\ast }(u_{0})[,$ we have
\begin{equation*}
\left( 1+\left\Vert S_{K}^{\ast }(u_{0})(t_{0})\right\Vert _{3}\right)
\sup_{0<t<\lambda }\sqrt{t}\left\Vert e^{t\Delta }S_{K}^{\ast
}(u_{0})(t_{0})\right\Vert _{\mathbf{\infty }}\leq \varepsilon _{3}.
\end{equation*}%
Choosing $t_{0}$ so that $0<T_{K}^{\ast }(u_{0})-t_{0}<\lambda ,$ we get $%
I_{\ast }(u_{0},t_{0})\leq \varepsilon _{3},$ which contradicts
 (\ref{E3}).
 \medskip

\noindent \textbf{Second step:} We will prove that for all $a\in ]0,T^{\ast
}[,~u\notin L^{\infty }([a,T^{\ast }[,L^{\infty }(\mathbb{R}^{3})).$ We
argue by opposition. Let $a\in ]0,T^{\ast }[$ such that $\mathcal{M}\equiv
\sup_{a\leq t<T^{\ast }}\left\Vert u(t)\right\Vert _{\infty }<\infty .$ Let $%
b\in \lbrack a,T^{\ast }[$ to be chosen later. Set $v_{0}=u(b)$ and $%
v=S_{K}^{\ast }(v_{0}).$ Using Lemma \ref{lemme3}, the Young
inequality and the fact
that the $L^{1}(\mathbb{R}^{3})$ norm of the kernel $K_{t}$ of the operator $%
e^{t\Delta }\mathbb{P}\nabla $ is equal to $\frac{C}{\sqrt{t}},$ we obtain,
for all $t$ in $[0,T_{K}^{\ast }(v_{0})[,$ the following estimates%
\begin{eqnarray*}
\left\Vert v(t)\right\Vert _{3} &\leq &\left\Vert v_{0}\right\Vert
_{3}+C\int_{0}^{t}\frac{\left\Vert v(s)\right\Vert _{\infty }\left\Vert
v(s)\right\Vert _{3}}{\sqrt{t-s}}ds \\
&\leq &\left\Vert v_{0}\right\Vert _{3}+2
C\mathcal{M}\sqrt{t}\sup_{0\leq
s\leq t}\left\Vert v(s)\right\Vert _{3} \\
&\leq &\left\Vert v_{0}\right\Vert _{3}+2
C\mathcal{M}\sqrt{T_{K}^{\ast
}(v_{0})}\sup_{0\leq s\leq t}\left\Vert v(s)\right\Vert _{3} \\
&=&\left\Vert v_{0}\right\Vert _{3}+2 C\mathcal{M}\sqrt{T^{\ast }-b}%
\sup_{0\leq s\leq t}\left\Vert v(s)\right\Vert _{3}.
\end{eqnarray*}%
Therefore, by taking $b$ closed enough to $T^{\ast }$, we get
\begin{equation*}
\mathcal{N}\equiv \sup_{0\leq s<T_{K}^{\ast }(v_{0})}\left\Vert
v(s)\right\Vert _{3}<\infty .
\end{equation*}%
In conclusion, for all $t_{0}$ in $[0,T_{K}^{\ast }(v_{0})[$, we have%
\begin{equation*}
I_{\ast }(v_{0},t_{0})\leq (1+\mathcal{N})\mathcal{M}\sqrt{T_{K}^{\ast
}(v_{0})-t_{0}},
\end{equation*}%
which contradicts (\ref{E3}). \medskip

\noindent \textbf{Third
step:} Let $\varepsilon >0.$ Suppose that there exists $\omega \in
S(\mathbb{R}^{3})$ such that
\begin{equation*}
\overline{%
\lim_{t\rightarrow T^{\ast }}}\left\Vert u(t)-\omega \right\Vert
_{B_{\infty }^{-1,\infty }}<\varepsilon .
\end{equation*}
Then, there exists $\delta _{0}\in ]0,T^{\ast }[$ so that
\begin{equation*}
\sup_{t\in \lbrack T^{\ast }-\delta_0 ,T^{\ast }[}\left\Vert
u(t)-\omega \right\Vert _{\mathbf{B}_{\infty }^{-1,\infty
}}<\varepsilon .
\end{equation*}
Let $\delta \in ]0,\delta _{0}[$ to be chosen later. Set
$w_{0}=u(T^{\ast }-\delta )$ and $w=S_{K}^{\ast }(w_{0}).$
According to Lemma \ref{lemme3} $T_{K}^{\ast }(w_{0})=\delta $ and
$w=u(.+$ $T^{\ast }-\delta ).$ Thus,
\begin{equation*}
\sup_{0<t<\delta }\left\Vert w(t)-\omega \right\Vert _{\mathbf{B}_{\infty
}^{-1,\infty }}<\varepsilon .
\end{equation*}%
Fix $s>0.$ Kato's Theorem ensures that $w\in C([0,\delta
[;\tilde{B}_{\infty
}^{s+1,\infty }).$ On the other hand, we have%
\begin{equation*}
w(t)=e^{t\Delta }w_{0}+\sum_{j=0}^{1}\mathbb{L}\left( \mathbb{P}\nabla .\pi
_{j}\left[ (w-\omega )\otimes w\right] \right) +\mathbb{L}\left( \mathbb{P}%
\nabla .\pi _{j}\left[ \omega \otimes w\right] \right) (t).
\end{equation*}%
Therefore, applying Lemma \ref{lemme1} and \ref{lemme2} and using
the fact that $\mathbb{P}\nabla $ maps boundly $B_{\infty
}^{r,\infty }$ into $B_{\infty }^{r-1,\infty }$ \ (
$r\in \mathbb{R}$), yields for all $\delta _{1}<\delta$%
\begin{equation}
\sup_{0<t<\delta_{1} }\left\Vert w(t)\right\Vert _{B_{\infty
}^{s+1,\infty }}\leq \left\Vert w_{0}\right\Vert _{B_{\infty
}^{s+1,\infty
}}+C\{\varepsilon +\left\Vert \omega \right\Vert _{\infty }\sqrt{\delta }%
\}\sup_{0<t<\delta_{1} }\left\Vert w(t)\right\Vert _{B_{\infty
}^{s+1,\infty }}, \label{e}
\end{equation}%
where the constant $C$ depends only on $s.$ Now, suppose that
$\varepsilon \leq \varepsilon _{\ast }=\frac{1}{4C}.$ Then, one
can choose $\delta $ small enough so that $C\{\varepsilon
+\left\Vert \omega \right\Vert _{\infty }\sqrt{\delta }\}\leq
\frac{1}{2}.$ Hence, the estimate (\ref{e}) implies that
$\sup_{0<t<\delta_{1} }\left\Vert w(t)\right\Vert _{B_{\infty
}^{s+1,\infty }}\leq 2\left\Vert w_{0}\right\Vert _{B_{\infty
}^{s+1,\infty }}.$ Now using the embedding $B_{\infty
}^{s+1,\infty }\hookrightarrow
L^{\infty }(\mathbb{R}^{3})$ and the fact that $\delta_{1} $ is arbitrary in $%
]0,\delta [,$ we get
\begin{equation*}
\sup_{t\in \lbrack T^{\ast }-\delta ;T^{\ast }[}\left\Vert
u(t)\right\Vert _{\infty }=\sup_{0<t<\delta }\left\Vert
w(t)\right\Vert _{\infty }<\infty ,
\end{equation*}%
which contradicts the conclusion of the second step. Then, we conclude that
for all $\omega \in S(\mathbb{R}^{3})$ we have
\begin{equation*}
\overline{\lim_{t\rightarrow T^{\ast }}}\left\Vert u(t)-\omega \right\Vert
_{B_{\infty }^{-1,\infty }}\geq \varepsilon _{\ast }.
\end{equation*}%
By density, this inequality remains true for all $\omega \in \overline{S(%
\mathbb{R}^{3})}^{B_{\infty }^{-1,\infty }}.\medskip $
\providecommand{\bysame}{\leavevmode\hbox
to3em{\hrulefill}\thinspace}

\bibliographystyle{amsplain}


\providecommand{\bysame}{\leavevmode\hbox
to3em{\hrulefill}\thinspace}
\providecommand{\MR}{\relax\ifhmode\unskip\space\fi MR }
\providecommand{\MRhref}[2]{%
  \href{http://www.ams.org/mathscinet-getitem?mr=#1}{#2}
} \providecommand{\href}[2]{#2}

\end{document}